\let\expandafter\oldproof\csname\string\proof\endcsname
\let\oldendproof\endproof
\renewenvironment{proof}[1][\proofname]{%
	\oldproof[\bf #1]%
}{\oldendproof}
\theoremstyle{plain}
\newtheorem{theorem}{Theorem}
\newtheorem{lemma}{Lemma}[section]
\newtheorem{construction}[lemma]{Construction}
\definecolor{RED}{rgb}{1,0,0}\definecolor{BLUE}{rgb}{0,0,1} 
\newcommand{\ex}{\text{ex}}
\title{Constructing Dense Grid-Free Linear $3$-Graphs}
\author{Lior Gishboliner\thanks{ETH Zurich. 
			Email: lior.gishboliner@math.ethz.ch.}
	\and Asaf Shapira\thanks{
		School of Mathematics, Tel Aviv University, Tel Aviv 69978, Israel.
		Email: asafico@tau.ac.il. Supported in part by ISF Grant 1028/16, ERC Consolidator Grant 863438 and NSF-BSF Grant 20196.}
}
\begin{document}
	\maketitle 
	
	\begin{abstract}
		We show that there exist linear $3$-uniform hypergraphs with $n$ vertices and $\Omega(n^2)$ edges which contain no copy of the $3 \times 3$ grid. This makes significant progress on a conjecture of F\"{u}redi and \nolinebreak Ruszink\'{o}. We also discuss connections to proving lower bounds for the $(9,6)$ Brown-Erd\H{o}s-S\'{o}s problem and to a problem of Solymosi and Solymosi. 
	\end{abstract}
	
	\section{Introduction}
	In recent years there has been some interest in Tur\'{a}n-type results for linear hypergraphs \cite{FR, Ge_Shangguan, GS}. 
	In this paper, all hypergraphs are $3$-uniform. For a family $\mathcal{H}$ of $3$-uniform hypergraphs, we let $\ex_{\text{lin}}(n,\mathcal{H})$ denote the maximum number of edges in a linear $3$-uniform $\mathcal{H}$-free hypergraph on $n$ vertices. When $\mathcal{H}$ has a single element $H$, we will write $\ex_{\text{lin}}(n,H)$. Arguably, the interest in problems of this type is motivated by the famous Brown-Erd\H{o}s-S\'{o}s conjecture \cite{BHS1,BHS2}, which states that, for every $k \geq 3$, if $\mathcal{H}_{k+3,k}$ is the set of all $3$-uniform hypergraphs with $k$ edges and at most $k+3$ vertices (such hypergraphs are called {\em $(k+3,k)$-configurations}), then\footnote{The Brown-Erd\H{o}s-S\'{o}s conjecture is usually stated about general (i.e., not necessarily linear) hypergraphs, but it is well-known that it suffices to consider linear hypergraphs. Indeed, if a hypergraph $H$ contains no $(k+3,k)$-configuration, then every pair of vertices is contained in at most $k-1$ edges, so $H$ has a linear subhypergraph with at least $e(H)/(k-1) = \Omega(e(H))$ edges.} $\ex_{\text{lin}}(n,\mathcal{H}_{k+3,k}) = o(n^2)$. So far, this conjecture has only been proven in the case $k = 3$. This is a celebrated result of Ruzsa and Szemer\'{e}di \cite{RS}, which became known as the $(6,3)$ theorem. Ruzsa and Szemer\'{e}di \cite{RS} have also given a construction which shows that $\ex_{\text{lin}}(n,\mathcal{H}_{6,3}) \geq n^{2-o(1)}$, implying that the exponent $2$ in the $(6,3)$ theorem cannot be improved. 
	For $k \geq 4$, the Brown-Erd\H{o}s-S\'{o}s conjecture remains widely open despite considerable effort, with the best approximate result recently obtained in \cite{CGLS} (see also \cite{Sarkozy_Selkow, Solymosi}).
	
	It is easy to check that $\mathcal{H}_{6,3}$ contains only one linear hypergraph: the triangle $\mathbb{T}$, which is the hypergraph with vertices $1,2,3,4,5,6$ and edges $\{1,2,3\},\{3,4,5\},\{5,6,1\}$. Thus, the aforementioned results of Ruzsa and Szemer\'{e}di \cite{RS} are equivalent to the statement 
	$n^{2-o(1)} \leq \ex_{\text{lin}}(n,\mathbb{T}) \leq o(n^2)$. 
	
	It is natural to try and prove that $\ex_{\text{lin}}(n,\mathcal{H}_{k+3,k}) \geq n^{2-o(1)}$ for every $k \geq 3$, which would mean that, in a sense, the Brown-Erd\H{o}s-S\'{o}s conjecture is optimal. For $k = 4,5$, such a lower bound follows from the simple observation that every $(7,4)$- or $(8,5)$-configuration contains a $(6,3)$-configuration. Similar considerations were used in \cite{Ge_Shangguan} to handle the cases $k = 7,8$. For $k = 6$, however, such arguments could not be used, since there exists a $(9,6)$-configuration which contains no $(6,3)$-configuration; this is the {\em $3 \times 3$ grid} $\mathbb{G}_{3 \times 3}$, which is the $3$-uniform hypergraph whose vertices are the nine points in a $3 \times 3$ point array, and whose edges correspond to the $6$ horizontal and vertical lines of this array.
	It is not hard to verify\footnote{Indeed, let $H$ be a linear $(9,6)$-configuration avoinding $\mathbb{T}$. First, observe that $H$ has maximum degree $2$, for if $\{a,b,c\},\{a,d,e\},\{a,f,g\}$ are three edges containing $a$, then there can be only one edge containing the remaining two vertices (as $H$ is linear), so there must be an edge which contains two vertices from $\{b,c,d,e,f,g\}$, which gives a $\mathbb{T}$. Now, as $e(H) = 6$, all degrees in $H$ must be $2$. Consider the two edges $\{a,b,c\},\{a,d,e\}$ containing some vertex $a$. Let $f,g,h,i$ be the four remaining vertices. Each of the four remaining edges must contain two vertices from $\{f,g,h,i\}$ and one from $\{b,c,d,e\}$. Every vertex from $\{b,c,d,e\}$ must be covered once by these edges, and every vertex from $\{f,g,h,i\}$ twice. Hence, the pairs from $\{f,g,h,i\}$ which are covered by these edges must form a $C_4$. Since $H$ is $\mathbb{T}$-free, $b$ and $c$ must be contained in opposite edges of this $C_4$, and the same for $d$ and $e$. This gives a $\mathbb{G}_{3,3}$.} (see also \cite{Ge_Shangguan}) that every linear $(9,6)$-configuration either contains a triangle $\mathbb{T}$ or is isomorphic to $\mathbb{G}_{3 \times 3}$. Hence, $\ex_{\text{lin}}(n,\mathcal{H}_{9,6}) \geq  \ex_{\text{lin}}(n,\{\mathbb{T},\mathbb{G}_{3 \times 3}\})$. This relation has led F\"{u}redi and Ruszink\'{o} \cite{FR} to study extremal problems related to the grid. In particular, they conjectured that 
	$\ex_{\text{lin}}(n,\mathbb{G}_{3 \times 3}) = 
	\left( \frac{1}{6} - o(1) \right)n^2$, and, more strongly, that for every large enough admissible $n$, there exists a Steiner triple system of order $n$ which is $\mathbb{G}_{3 \times 3}$-free. 
	Using a standard probabilistic alterations argument, F\"{u}redi and Ruszink\'{o} \cite{FR} showed that $\ex_{\text{lin}}(n,\mathbb{G}_{3 \times 3}) = \Omega(n^{1.8})$. This was then slightly improved (as a special case of a more general result) to $\Omega(n^{1.8}\log^{1/5}{n})$ by Shangguan and Tamo \cite{ST}.
	Here we make significant progress on the conjecture of F\"{u}redi and Ruszink\'{o} \cite{FR}, by showing that 
	$\ex_{\text{lin}}(n,\mathbb{G}_{3 \times 3}) = \Omega(n^2)$. 
	\begin{theorem}\label{thm:main}
		For infinitely many $n$, there exists a linear $\mathbb{G}_{3 \times 3}$-free $3$-uniform hypergraph with $n$ vertices and $(\frac{1}{16} - o(1))n^2$ edges. 
	\end{theorem}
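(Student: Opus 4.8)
The plan is to build $H$ as a ``bipartite'' Cayley-type hypergraph and to reduce $\mathbb{G}_{3 \times 3}$-freeness to the statement that a carefully chosen set of allowed differences avoids a fixed homogeneous system of linear equations. Fix an odd integer $m$ (ranging over a suitable infinite family), put $n=2m$, and let the vertex set be a disjoint union $A\sqcup B$ with $A=B=\mathbb{Z}_m$. Fix a symmetric set $\Gamma\subseteq\mathbb{Z}_m\setminus\{0\}$ (so $\Gamma=-\Gamma$), to be specified at the end, and let
\[
E(H)=\bigl\{\{x,y,z\}:\ x,y\in A,\ z\in B,\ x+y=z\ \text{in}\ \mathbb{Z}_m,\ x-y\in\Gamma\bigr\}.
\]
Two quick checks come first. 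Linearity: two edges sharing two $A$-vertices determine the $B$-vertex (their sum), two edges sharing an $A$-vertex $x$ and a $B$-vertex $z$ determine the other $A$-vertex $z-x$, and no edge has two $B$-vertices. Edge count: for each $z\in B$ the pairs $\{x,y\}$ with $x+y=z$ and $x-y\in\Gamma$ biject (via $s=x-y$) with the orbits $\{s,-s\}$ inside $\Gamma$, using that $m$ is odd and $0\notin\Gamma$; hence $e(H)=m\cdot|\Gamma|/2$, so if $|\Gamma|=(\tfrac12-o(1))m$ then $e(H)=(\tfrac14-o(1))m^2=(\tfrac1{16}-o(1))n^2$. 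Thus the whole problem reduces to finding a symmetric $\Gamma$ of density $\tfrac12-o(1)$ for which $H$ is $\mathbb{G}_{3 \times 3}$-free.

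The second step is a structural reduction: $H$ contains a copy of $\mathbb{G}_{3 \times 3}$ if and only if there exist $d_0,d_1,d_2,d_3,d_4,d_5\in\Gamma$ with
\[
d_0+d_1+d_2+d_3+d_4+d_5=0,\qquad d_0+d_1=d_3+d_4,\qquad d_1+d_2=d_4+d_5,
\]
such that the six partial sums $0,\ d_0,\ d_0+d_1,\ d_0+d_1+d_2,\ d_0+\cdots+d_3,\ d_0+\cdots+d_4$ are pairwise distinct. To see ``only if'', note that in any copy of $\mathbb{G}_{3 \times 3}$ in $H$ each of the nine vertices lies in exactly one ``row'' edge and one ``column'' edge, while each edge of $H$ has exactly one $B$-vertex; hence the three $B$-vertices of the six edges form a partial permutation matrix inside the $3\times 3$ array, and after relabelling columns they sit on its diagonal. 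Reading the $A$-parts of the six edges in cyclic order then exhibits the six $A$-vertices $v_0,\dots,v_5$ as the vertices of a hexagon in the Cayley graph on $A$ with connection set $\Gamma$; setting $d_i=v_{i+1}-v_i$, the hexagon closing up gives $\sum_i d_i=0$, the condition that a row and the column through the same diagonal $B$-vertex have equal sum gives (modulo $\sum_i d_i=0$) the two remaining equations, and distinctness of the nine vertices is exactly the stated non-degeneracy (distinctness of the three $B$-vertices turns out to follow from distinctness of the partial sums together with the equations). Conversely, given such $d_i$, for any $t\in\mathbb{Z}_m$ the six $A$-vertices $v_j=t+\sum_{i<j}d_i$ together with the three $B$-vertices $v_0+v_1,\,v_1+v_2,\,v_2+v_3$ span a copy of $\mathbb{G}_{3 \times 3}$ all of whose edges lie in $E(H)$.

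It remains to exhibit a symmetric $\Gamma\subseteq\mathbb{Z}_m$ of density $\tfrac12-o(1)$ with no non-degenerate solution of this system. Eliminating $d_3,d_4,d_5$ via the equations, this becomes: there should be no $d_0,d_1,d_2\in\Gamma$ for which, in addition, $d_0+2d_1+2d_2,\ 2d_0+3d_1+2d_2,\ 2d_0+2d_1+d_2\in\Gamma$ and $2(d_0+d_1+d_2)\neq 0$ (together with a few further non-vanishing conditions). The point that makes density $\tfrac12-o(1)$ plausible is that the system is the intersection of the two \emph{translation-invariant} equations $d_0+d_1=d_3+d_4$ and $d_1+d_2=d_4+d_5$ with the single \emph{non}-translation-invariant equation $\sum_i d_i=0$, and for such systems dense avoidance is not precluded by Roth/Szemer\'edi-type obstructions --- the same phenomenon that underlies the existence of sum-free sets of density $\tfrac12-o(1)$. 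Accordingly, the plan is to take $\Gamma$ to be an explicitly defined, arithmetically structured set (for instance via a suitable residue or ``window'' condition in $\mathbb{Z}_m$, possibly after passing to a special modulus $m$) tailored so that the constraint $\sum_i d_i=0$ provably prevents the six linear forms above from lying simultaneously in $\Gamma$. Constructing such a $\Gamma$ with the full density $\tfrac12-o(1)$ --- and, in particular, making it genuinely exploit the interaction between $\sum_i d_i=0$ and the two colour equations, rather than any single equation (each of which, in isolation, is unavoidable for dense sets) --- is the heart of the argument and is where I expect the main difficulty to lie; once it is in place, only absorbing the $o(1)$ density loss remains, which should be routine.
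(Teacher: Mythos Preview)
Your proposal is not a proof but a reduction to an unsolved problem: you explicitly say that constructing the set $\Gamma\subseteq\mathbb{Z}_m$ of density $\tfrac12-o(1)$ with no non-degenerate solution to your system ``is the heart of the argument and is where I expect the main difficulty to lie'', and you do not construct it. The sum-free analogy you invoke is misleading. Your own two equations $d_0+d_1=d_3+d_4$ and $d_1+d_2=d_4+d_5$ are translation-invariant, and subtracting them gives the further translation-invariant relation $d_0+d_5=d_2+d_3$; equivalently, among the six linear forms $d_0,d_1,d_2,\,d_0+2d_1+2d_2,\,2d_0+3d_1+2d_2,\,2d_0+2d_1+d_2$ (all required to lie in $\Gamma$) one has the additive-quadruple identity $d_0+(d_0+2d_1+2d_2)=d_2+(2d_0+2d_1+d_2)$. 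Such quadruples are forced in abundance in any set of positive density, so the single non-invariant equation $\sum_i d_i=0$ has to do far more work than in the sum-free situation. Concretely, every obvious candidate fails: with $d_0=d_1=d_2=1$ all six forms equal $1,1,1,5,7,5$, so any symmetric interval $\Gamma$ of density $\tfrac12$ around $0$ contains a grid; and for $\Gamma$ the quadratic (non)residues the six forms are generically independent characters, again yielding solutions. So the missing step is not routine---it is the entire problem.

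The paper's proof is completely different and avoids this difficulty altogether. It uses a genuinely non-linear construction over $\mathbb{F}_p$: the $3$-partite hypergraph with edges $(x,\,x+a,\,x\cdot a)$. Grid-freeness is then an unconditional algebraic fact (Lemma~\ref{lem:grid_copy}): writing out the grid equations and eliminating yields $(p_1-p_2)(p_2-p_3)(p_3-p_1)=0$, a contradiction. No auxiliary set needs to be found. Choosing $X$ to be the quadratic residues and $A$ the non-residues makes the hypergraph linear on the nose and gives $|X|+|Y|+|Z|\leq 2p-1$ with $|X||A|=(p-1)^2/4$ edges, hence the constant $\tfrac1{16}$. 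The paper even remarks in Section~\ref{sec:concluding_remarks} that the purely additive $3$-partite hypergraph with edges $(x,x+a,x+2a)$ \emph{does} contain $\mathbb{G}_{3\times 3}$ and that a non-linear relation appears necessary to avoid it; your construction is likewise purely additive, which is further reason to doubt that a suitable $\Gamma$ exists.
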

	\noindent
	Theorem \ref{thm:main} is proved in the following section. Then, in Section \ref{sec:concluding_remarks}, we discuss some related open problems. 
	
	\section{The Construction}

	\begin{construction}\label{construction}
		Let $\mathbb{F}$ be a field and let $X,A \subseteq \mathbb{F}$. Define $H(X,A)$ to be the $3$-partite $3$-uniform hypergraph with sides $X$, $Y := \{x + a : x \in X, a \in A\}$ and $Z := \{ x \cdot a : x \in X, a \in A \}$, and with an edge $(x,x+a,x \cdot a) \in X \times Y \times Z$ for every $x \in X$ and $a \in A$.    
	\end{construction}
	We now prove that the hypergraph $H(X,A)$ defined in Construction \ref{construction} is always 
	$\mathbb{G}_{3 \times 3}$-free. We will then show that it contains a dense linear subhypergraph.
	We denote the vertices of $\mathbb{G}_{3 \times 3}$ by $\{p_i,q_i,r_i : 1 \leq i \leq 3\}$ and its edges by 
	$\{ \{p_i,q_i,r_i\},\{p_{i+1},q_{i+2},r_{i}\} : 1 \leq i \leq 3\}$, where (here and later on) indices are taken modulo $3$.
	A {\em $3$-partition} of a $3$-uniform hypergraph $F$ is a partition $V(F) = P \cup Q \cup R$ such that every edge of $F$ contains one element from each of the sets $P,Q,R$. 
	Observe that $\{p_1,p_2,p_3\},\{q_1,q_2,q_3\},\{r_1,r_2,r_3\}$ is a $3$-partition of $\mathbb{G}_{3 \times 3}$. It can be verified\footnote{Indeed, every $3$-partition of $\mathbb{G}_{3 \times 3}$ is either obtained from the $3$-partition $(P,Q,R)$ by permuting its classes, or equals $\left( \{p_1,q_3,r_2\},\{p_2,q_1,r_3\},\{p_3,q_2,r_1\} \right)$ or one of its permutations.} that every two $3$-partitions of $\mathbb{G}_{3 \times 3}$ are equivalent, in the sense that there is an automorphism of $\mathbb{G}_{3 \times 3}$ which maps every class of one to a class of the other. 
	\begin{lemma}\label{lem:grid_copy}
		Let $\mathbb{F}$ be a field and let $X,A \subseteq \mathbb{F}$.
		Then $H(X,A)$ is $\mathbb{G}_{3\times 3}$-free. 
	\end{lemma}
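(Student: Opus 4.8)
The plan is to argue by contradiction: suppose $H(X,A)$ contains a copy of $\mathbb{G}_{3 \times 3}$. Since $H(X,A)$ is $3$-partite with sides $X$, $Y$, $Z$, this copy of $\mathbb{G}_{3 \times 3}$ inherits a $3$-partition (the restriction of $(X,Y,Z)$ to the nine vertices of the copy). By the remark preceding the lemma, every $3$-partition of $\mathbb{G}_{3 \times 3}$ is equivalent to the standard one $(\{p_1,p_2,p_3\},\{q_1,q_2,q_3\},\{r_1,r_2,r_3\})$ up to an automorphism, so after relabelling we may assume the copy sits inside $H(X,A)$ with $p_1,p_2,p_3 \in X$, $q_1,q_2,q_3 \in Y$, and $r_1,r_2,r_3 \in Z$. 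Write $p_i = x_i \in X$. For each edge $\{p_i,q_i,r_i\}$ and each edge $\{p_{i+1},q_{i+2},r_i\}$ of the grid, the fact that it is an edge of $H(X,A)$ forces the $Y$-coordinate to be $p + a$ and the $Z$-coordinate to be $p \cdot a$ for the common value of $a \in A$ determined by that edge. So I would introduce variables $a_1, \dots, a_6 \in A$, one per edge, and translate the incidence structure of $\mathbb{G}_{3 \times 3}$ into a system of equations over $\mathbb{F}$.

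Concretely, the six edges share vertices in a prescribed pattern, and sharing a $Y$-vertex $q_i$ between two edges forces the two expressions $x + a$ computed from those edges to be equal; similarly sharing a $Z$-vertex $r_i$ forces the two products $x \cdot a$ to be equal. I would write all of these down. For instance, $q_i$ lies on $\{p_i,q_i,r_i\}$ and on $\{p_{i-1}, q_i, r_{i-1}\}$ (reading the index pattern $\{p_{i+1},q_{i+2},r_i\}$ appropriately), giving $x_i + \alpha = x_{i-1} + \beta$ for the relevant $\alpha,\beta \in A$; and $r_i$ lies on $\{p_i,q_i,r_i\}$ and on $\{p_{i+1},q_{i+2},r_i\}$, giving $x_i \cdot \alpha = x_{i+1} \cdot \gamma$. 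The key extra ingredient is that the nine grid vertices are \emph{distinct}, which in particular forces $x_1, x_2, x_3$ to be pairwise distinct and the $a$'s appearing on edges through a common $X$-vertex to be distinct. The goal is to derive from this system that two of the $p_i$, or two of the $q_i$, or two of the $r_i$ must coincide — contradicting distinctness.

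I expect the main obstacle to be purely the bookkeeping: getting the index conventions of $\mathbb{G}_{3 \times 3}$ exactly right so that the shared-vertex equations are the correct ones, and then doing the elimination cleanly. Once the system is written, the elimination itself should be short — a typical outcome is that combining a sum-equation $x_i + a = x_j + a'$ with the corresponding product-equation $x_i \cdot a = x_j \cdot a'$ yields $(x_i - x_j)(\text{something}) = 0$, and since $x_i \neq x_j$ the "something" must vanish, which then propagates to force a collision elsewhere (e.g. $a = a'$ on two edges through the same point of $X$, or $q_i = q_j$). I would organize the elimination around the $X$-side first (since $X$ generates both other sides), show the three values $x_1, x_2, x_3$ and the six $a$-values satisfy enough linear and multiplicative relations to be overdetermined, and conclude. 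A minor subtlety to keep in mind: the argument should not secretly assume $\mathrm{char}\,\mathbb{F} \neq 2, 3$ or that $0 \notin X$ — if such a hypothesis is actually needed it must be flagged, but I anticipate the distinctness of the nine vertices is enough to avoid degenerate cases without any restriction on the field.
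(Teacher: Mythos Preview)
Your setup is correct and matches the paper's: reduce via the $3$-partition symmetry to $p_i \in X$, $q_i \in Y$, $r_i \in Z$, and translate the six edges into algebraic relations. The gap is in the elimination step. Your heuristic --- pair a sum-equation $x_i + a = x_j + a'$ with a ``corresponding'' product-equation $x_i a = x_j a'$ and factor out $x_i - x_j$ --- would require two edges sharing \emph{both} a $Y$-vertex and a $Z$-vertex, but no two edges of $\mathbb{G}_{3\times 3}$ share two vertices. Concretely, the shared-$r_i$ equation links the edge $\{p_i,q_i,r_i\}$ to $\{p_{i+1},q_{i+2},r_i\}$, while the shared-$q_{i+2}$ equation links $\{p_{i+2},q_{i+2},r_{i+2}\}$ to $\{p_{i+1},q_{i+2},r_i\}$; these never involve the same pair of $a$'s, so the local factoring you describe does not occur, and ``overdetermined'' is not automatic.

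What the paper does instead is short and worth knowing. First eliminate the auxiliary $a$'s immediately via $z = x(y-x)$, giving for each $i$ (indices mod $3$)
\[
p_i(q_i - p_i) \;=\; r_i \;=\; p_{i+1}(q_{i+2} - p_{i+1}).
\]
This is a system of three \emph{linear} equations in $q_1,q_2,q_3$ whose coefficient matrix is singular, so one must actually exhibit inconsistency. The trick is to multiply the $i$th equation by $p_{i+2}$ and sum over $i$: on the left the terms $p_ip_{i+2}q_i$ and on the right the terms $p_{i+1}p_{i+2}q_{i+2}$ run over the \emph{same} three monomials, so all $q$'s cancel, leaving
\[
0 \;=\; \sum_{i} p_i^2 p_{i+2} - \sum_{i} p_{i+1}^2 p_{i+2} \;=\; (p_1 - p_2)(p_2 - p_3)(p_3 - p_1),
\]
which contradicts the distinctness of $p_1,p_2,p_3$. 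The missing idea in your plan is precisely this global cancellation (equivalently, spotting the left-kernel vector $(p_3,p_1,p_2)$ of the linear system); the argument cannot be finished one edge-pair at a time.
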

	
	\begin{proof}
		Suppose, for the sake of contradiction, that $H(X,A)$ contains a copy of $\mathbb{G}_{3 \times 3}$. 
		Since all $3$-partitions of $\mathbb{G}_{3 \times 3}$ are equivalent (as explained above), we may assume, without loss of generality, that $p_1,p_2,p_3 \in X$, $q_1,q_2,q_3 \in Y = \{x + a : x \in X, a \in A\}$ and $r_1,r_2,r_3 \in Z = \{ x \cdot a : x \in X, a \in A \}$. 
		By definition of $H(X,A)$, for every edge $\{x,y,z\} \in E(H)$ (with $x \in X$, $y \in Y$ and $z \in Z$) there is $a \in A$ such that $y = x + a$ and $z = x \cdot a$; hence, $z = x \cdot (y - x)$. 
		It follows that for every $1 \leq i \leq 3$, \nolinebreak we \nolinebreak must \nolinebreak have 
		$r_i = p_i \cdot (q_i - p_i) \text{ and } r_i = p_{i+1} \cdot (q_{i+2} - p_{i+1}).
		$
		Here and throughout the proof, indices are taken modulo $3$. By comparing these two expressions for $r_i$, we see that 
		\begin{equation}\label{eq:grid_copy_equation_1}
		p_i \cdot (q_i - p_i) = p_{i+1} \cdot (q_{i+2} - p_{i+1}).
		\end{equation}
		for every $1 \leq i \leq 3$. Multiplying \eqref{eq:grid_copy_equation_1} by $p_{i+2}$ and then summing over $1 \leq i \leq 3$, we obtain
		$$
		\sum_{i = 1}^{3}{p_ip_{i+2} \cdot (q_i - p_i)} = 
		\sum_{i = 1}^{3}{p_{i+1}p_{i+2} \cdot (q_{i+2} - p_{i+1})}.
		$$
		It is easy to see that for every $1 \leq i \leq 3$, both sides have the term $p_ip_{i+2}q_i$. Cancelling out these terms and rearranging, we get
		$$
		0 = \sum_{i = 1}^{3}{p_i^2p_{i+2}} - \sum_{i = 1}^{3}{p_{i+1}^2p_{i+2}} = 
		(p_1 - p_2)(p_2 - p_3)(p_3 - p_1).
		$$
		Hence, there must be $1 \leq i \leq 3$ such that $p_{i+1} = p_i$. However, this is impossible as $p_1,p_2,p_3 \in X$ must correspond to distinct vertices of a copy of $\mathbb{G}_{3 \times 3}$. This contradiction completes the proof. 
	\end{proof} 
	\begin{proof}[Proof of Theorem \ref{thm:main}]
		We first prove Theorem \ref{thm:main} with a slightly worse bound, namely, with the fraction $\frac{1}{16}$ replaced by $\frac{1}{18}$. We then explain how our argument can be modified to give $\frac{1}{16}$.
		
		Let $p$ be an odd prime power, and set $X := A := \mathbb{F}_p \setminus \{0\}$. Let $H = H(X,A)$ be the hypergraph from Construction \ref{construction}. By Lemma \ref{lem:grid_copy}, $H$ is $\mathbb{G}_{3 \times 3}$-free. 
		We claim that for each edge $e = (x,x+\nolinebreak a,x\cdot \nolinebreak a) \in E(H) \subseteq X \times Y \times Z$, if $f \in E(H) \setminus \{e\}$ satisfies that $|e \cap f| = 2$ then $f = (a,x+a,x \cdot a)$. So let $f = (y,y+b,y \cdot b) \in E(H) \setminus \{e\}$ be such that $|e \cap f | = 2$. We cannot have $(x,x+a) = (y,y+b)$ or $(x,x \cdot a) = (y,y \cdot b)$, for otherwise we would have $x = y, a = b$ and hence $e = f$. Therefore, we must have $(x+a,x \cdot a) = (y+b, y\cdot b)$, which gives 
		$
		y(x + a - y) = x \cdot a.
		$
		Solving this quadratic equation for $y$, we get that $y = x$ or $y = a$, and hence $(y,b) = (x,a)$ or $(y,b) = (a,x)$. In the former case, $f = e$, and in the latter case $f = (a,x+a,x \cdot a)$. This proves our claim. 
		It follows that for each $e \in E(H)$ there is at most one other edge $f \in E(H)$ such that $|e \cap f| = 2$. By deleting one edge from each such pair $(e,f)$, we obtain a linear sub-hypergraph $H'$ of $H$ with 
		$e(H') \geq \frac{e(H)}{2} = |X||A| = (\frac{1}{2} - o(1))p^2 = (\frac{1}{18} - o(1))v(H)^2$, where in the last equality we used the fact that $v(H) = 3p-2$ as $|X| = p-1$, $|Y| = p$ and $Z = p-1$. This shows that $\ex_{\text{lin}}(n,\mathbb{G}_{3 \times 3}) \geq 
		(\frac{1}{18} - o(1))n^2$.
		
		To improve the constant, we choose $X$ and $A$ differently: let $X$ be the set of (non-zero) quadratic residues and $A$ be the set of (non-zero) quadratic non-residues in $\mathbb{F}_p$. Evidently, $|X| = |A| = \frac{p-1}{2}$ and $|Y| \leq p$. As
		$Z = \{ x \cdot a : x \in X, a \in A \} = A$, one also has $|Z| = \frac{p-1}{2}$. Altogether we get 
		$v(H) = |X| + |Y| + |Z| \leq 2p-1$. Moreover, $e(H) = |X||A| = (\frac{1}{4} - o(1))p^2 = (\frac{1}{16} - o(1))v(H)^2$. Crucially, we observe that $H$ is linear, because for every $e = (x,x+a,x \cdot a) \in E(H)$, the edge $f = (a, x + a, x \cdot a)$ is not in $H$, as $x$ is a quadratic residue while $a$ is not. This completes the proof.
	\end{proof}
\section{Concluding Remarks And Open Problems}\label{sec:concluding_remarks}
\begin{itemize}
\item Another problem raised in \cite{FR} is to prove that 
$\ex_{\text{lin}}(n,\mathcal{H}_{9,6}) \geq n^{2-o(1)}$. This problem remains open. Recalling that $\ex_{\text{lin}}(n,\mathcal{H}_{9,6}) \geq \ex_{\text{lin}}(n,\{\mathbb{T},\mathbb{G}_{3 \times 3}\})$, we see, in light of Lemma \ref{lem:grid_copy}, that it suffices to find a choice of sets $X,A \subseteq \mathbb{F}_p$, $|X|,|A| \geq p^{1-o(1)}$, such that the hypergraph $H(X,A)$ has no triangles (i.e., no copies of $\mathbb{T}$). For this, one needs that there are no $x \in X$ and distinct $a,b,c \in A$ such that $(x + a - b) \cdot b = x \cdot c$. 

\item There is another construction of a linear $3$-uniform grid-free hypergraph with $\Omega(n^2)$ edges. For sets $X,A \subseteq \mathbb{F}_p$, define a $3$-partite hypergraph with sides $X,Y,Z$ by placing the edge \linebreak $(x,x+a,x+a^2) \in X \times Y \times Z$ for every $x \in X, a \in A$. 
Here one needs to be more careful: unlike Construction \ref{construction}, this hypergraph can contain a copy of $\mathbb{G}_{3 \times 3}$, but only if there are $x_1,x_2 \in X$ and $a \in A$ satisfying $4x_1 + 4a = 4x_2 + 1$. 
Let us prove this. Consider a copy of $\mathbb{G}_{3,3}$ with vertices $\{p_i,q_i,r_i : 1 \leq i \leq 3\}$, as described before Lemma \ref{lem:grid_copy}. Here, this copy corresponds to the equations $r_i - p_i = (q_i - p_i)^2$ and $r_i - p_{i+1} = (q_{i+2} - p_{i+1})^2$ for $i = 1,2,3$. Hence, $p_i + (q_i - p_i)^2 = p_{i+1} + (q_{i+2} - p_{i+1})^2$. Substituting $u_i := p_{i+1} - p_i$ and $v_i := q_i - p_{i+1}$ ($i = 1,2,3$), we get $(v_i + u_i)^2 = u_i + (v_{i+2} - u_i)^2$, and, after rearranging, 
\begin{equation}\label{eq:quadratic_main}
(2v_i + 2v_{i+2} - 1)u_i = v_{i+2}^2 - v_i^2.
\end{equation}

Now, if $2v_i + 2v_{i+2} \neq 1$ for all $1 \leq i \leq 3$, then in equation \eqref{eq:quadratic_main} we can divide and get $u_i = (v_{i+2}^2 - v_i^2)/(2v_i + 2v_{i+2} - 1)$ for all $1 \leq i \leq 3$. Summing this over $i$ and using the fact that $u_1 + u_2 + u_3 = (p_2 - p_1) + (p_3 - p_2) + (p_1 - p_3) = 0$, we get
$$
0 = 
\sum_{i=1}^3 {u_i} 
= 
\sum_{i=1}^3 {\frac{v_{i+2}^2 - v_i^2}{2v_i + 2v_{i+2} - 1}}
=
\frac{-2(v_3-v_1)(v_1-v_2)(v_2-v_3)}{(2v_1+2v_3-1)(2v_2+2v_1-1)(2v_3+2v_2-1)} \; .
$$
Hence, there must be $1 \leq i \leq 3$ such that $v_{i+2} = v_i$. Plugging this into \eqref{eq:quadratic_main} and using that $2v_i + 2v_{i+2} \neq 1$, we get that $u_i = p_{i+1} - p_i = 0$, which is impossible as $p_i,p_{i+1}$ are distinct vertices. Therefore, there must be $1 \leq i \leq 3$ such that $2v_i + 2v_{i+2} = 1$, hence also $v_{i+2}^2 - v_i^2 = 0$ by \eqref{eq:quadratic_main}. Plugging $v_{i+2} = 1/2 - v_i$ into $v_{i+2}^2 - v_i^2 = 0$, we get that $v_i = 1/4$, hence $q_i - p_{i+1} = 1/4$. Now, recall that by construction, $p_i,p_{i+1} \in X$ and $q_i = p_i + a$ for some $a \in A$. Hence, we have our desired solution to $4x_1 + 4a = 4x_2 + 1$ with $x_1,x_2 \in X$, $a \in A$. 
So in order for the hypergraph to be $\mathbb{G}_{3 \times 3}$-free, it suffices to choose $X,A$ that avoid such solutions; for example, one can take $X = A = \{1,\dots,\lfloor p/8 \rfloor \}$. 

This construction can also be a candidate for showing that $\ex_{\text{lin}}(n,\mathcal{H}_{9,6}) \geq n^{2-o(1)}$.
Again, the issue is choosing $X,A$ so as to avoid triangles, which in this case correspond to solutions to the equation $a + c^2 - c = b^2$ with distinct $a,b,c \in A$. Thus, in order to show that $\ex_{\text{lin}}(n,\mathcal{H}_{9,6}) \geq n^{2-o(1)}$, it suffices to show that there exists $A \subseteq \mathbb{F}_p$, $|A| = p^{1-o(1)}$, with no non-trivial solution to this equation.   
 

\item A related conjecture of Solymosi and Solymosi \cite{Solymosi} states that 
every (large enough) $3$-uniform hypergraph with $n$ vertices and $\Omega(n^2)$ edges contains a {\em $2$-core} on at most $9$ vertices, 
where a $2$-core is a hypergraph with minimum degree $2$. This conjecture is closely related\footnote{Strictly speaking, the Solymosi-Solymosi conjecture does not imply the case $k=6$ of the Brown-Erd\H{o}s-S\'{o}s conjecture, since the former allows the $2$-core to have less than $9$ vertices, and hence less than $6$ edges.} to the case $k=6$ of the Brown-Erd\H{o}s-S\'{o}s conjecture, since a $2$-core on $9$ vertices has at least $6$ edges.


Let $H$ be the $3$-partite hypergraph with sides $X,Y,Z$, all equal to  $\mathbb{F}_p$, and with edge-set 
\linebreak $\{(x,x+a,x+2a) \in X \times Y \times Z : x,a \in \mathbb{F}_p\}$. 
Alternatively, this is the hypergraph whose edges are all triples $(x,y,z) \in X \times Y \times Z$ satisfying $y = (x+z)/2$. 
By a somewhat lengthy case analysis, one can show that $H$ avoids all $2$-cores on at most $9$ vertices except for the grid $\mathbb{G}_{3 \times 3}$. 
Thus, the hypergraph corresponding to a linear relation (namely, the relation $y = (x+z)/2$) avoids all but one of the $2$-cores on at most $9$ vertices, whereas in order to avoid $\mathbb{G}_{3 \times 3}$ one needs a non-linear relation (as in Construction \ref{construction} or in the construction described in the previous item). It would be interesting to understand the connection between the structure of a configuration $F$ and the relation which can be used to define a hypergraph which avoids $F$.

We note that inspite of the above construction, it is plausible that the Solymosi-Solymosi conjecture is true; namely, that while there exist dense linear hypergraphs which avoid any individual $2$-core on at most $9$ vertices (and even hypergraphs which avoid all but one of them), avoiding all such $2$-cores in a dense linear hypergraph is impossible.  
\end{itemize}

\end{document}